\crefname{equation}{}{}
\colorlet{refkey}{orange!20}
\colorlet{labelkey}{blue!60}
\newtheorem{theorem}{Theorem}[section]
\newtheorem{lemma}[theorem]{Lemma}
\newtheorem{conjecture}[theorem]{Conjecture}
\theoremstyle{definition}
\theoremstyle{remark}
\newcommand{\EE}{\mathbb{E}}
\newcommand{\RR}{\mathbb{R}}
\newcommand{\ZZ}{\mathbb{Z}}
\title{More on lines in Euclidean Ramsey theory}
\author{David Conlon}
\address{Department of Mathematics, California Institute of Technology, Pasadena, CA 91125, USA}
\email{dconlon@caltech.edu}
\thanks{Research supported by NSF Award DMS-2054452.}
\author{Yu-Han Wu}
\address{
\'Ecole Normale Sup\'erieure - PSL, Paris, France}
\email{yu-han.wu@ens.psl.eu}
\begin{document}

\begin{abstract}
Let $\ell_m$ be a sequence of $m$ points on a line with consecutive points at distance one. Answering a question raised by Fox and the first author and independently by Arman and Tsaturian, we show that there is a natural number $m$ and a red/blue-colouring of $\mathbb{E}^n$ for every $n$ that contains no red copy of $\ell_3$ and no blue copy of $\ell_m$.
\end{abstract}

\maketitle

\section{Introduction}

Let $\EE^n$ denote $n$-dimensional Euclidean space, that is, $\RR^n$ equipped with the Euclidean metric. Given two sets $X_1, X_2 \subset \EE^n$, we write $\mathbb{E}^n \rightarrow (X_1, X_2)$ if every red/blue-coloring of $\EE^n$
contains either a red copy of $X_1$ or a blue copy of $X_2$, where a copy for us will always mean an isometric copy. Conversely, $\mathbb{E}^n \nrightarrow (X_1, X_2)$ means that there is some red/blue-coloring of $\EE^n$ which contains neither a red copy of $X_1$ nor a blue copy of $X_2$. 

The study of which sets $X_1, X_2 \subset \EE^n$ satisfy $\mathbb{E}^n \rightarrow (X_1, X_2)$ is a particular case of the Euclidean Ramsey problem, which has a long history going back to a series of seminal papers~\cite{EGMRSS1, EGMRSS2, EGMRSS3} of Erd\H{o}s, Graham, Montgomery, Rothschild, Spencer and Straus in the 1970s. Despite the vintage of the problem, surprisingly little progress has been made since these foundational papers (though see~\cite{FR, K91} for some important positive results). For instance, it is an open problem, going back to the papers  of Erd\H{o}s et al.~\cite{EGMRSS2}, as to whether, for every $n$, there is $m$ such that $\mathbb{E}^n \nrightarrow (X, X)$ for every $X \subset \EE^n$ with $|X| = m$. 

Write $\ell_m$ for the set consisting of $m$ points on a line with consecutive points at distance one. Perhaps because it is a little more accessible than the general problem, the question of determining which $n$ and $X$ satisfy the relation $\EE^n \rightarrow (\ell_2, X)$ has received considerable attention. For instance, it is known~\cite{Juh, Tsat} that $\mathbb{E}^2 \rightarrow (\ell_2,X)$ for every four-point set $X \subset \mathbb{E}^2$ and that $\mathbb{E}^2 \rightarrow (\ell_2,\ell_5)$. On the other hand~\cite{CsTo}, there is a set $X$ of $8$ points in the plane, namely, a regular heptagon with its center, such that $\mathbb{E}^2 \nrightarrow (\ell_2,X)$. 

In higher dimensions, by combining results of Szlam~\cite{Sz01} and Frankl and Wilson~\cite{FW}, it was observed by Fox and the first author~\cite{CF19} that $\mathbb{E}^n \rightarrow (\ell_2,\ell_m)$ provided $m \leq 2^{cn}$ for some positive constant $c$ (see also~\cite{ArmTsat, ArmTsat2} for some better bounds in low dimensions). 
Our concern here will be with a question raised independently by Fox and the first author~\cite{CF19} and also by Arman and Tsaturian~\cite{ArmTsat2}, namely, as to whether an analogous result holds with $\ell_2$ replaced by $\ell_3$. That is, for every natural number $m$, is there a natural number $n$ such that $\EE^n \rightarrow (\ell_3, \ell_m)$? We answer this question in the negative.

\begin{theorem} \label{thm:main1}
There exists a natural number $m$ such that $\mathbb{E}^n \nrightarrow (\ell_3, \ell_m)$ for all $n$.
\end{theorem}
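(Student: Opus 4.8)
The plan is to exhibit, for one fixed large $m$ and uniformly in $n$, a single two-colouring of $\mathbb{E}^n$ with no red $\ell_3$ and no blue $\ell_m$; equivalently, a set $E \subseteq \mathbb{E}^n$ (the red points) containing no three collinear points at consecutive distances $1,1$, whose complement contains no such chain of length $m$. Up to isometry the only ``uniform'' colourings are those depending on the distances of $x$ to a fixed reference configuration, and I would start with the simplest: a purely radial colouring $E=\{x:\|x\|^2\in S\}$ for a set $S\subseteq[0,\infty)$ of admissible squared radii.

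The reason for this choice is a reduction to one dimension. Along any unit-speed line $t\mapsto x_0+tu$, writing $z=x_0+tu$, $w=\|z\|^2$ and $s=\langle z,u\rangle$ (so $|s|\le\sqrt w$), one has $\|z+ju\|^2=w+2sj+j^2$ for all $j\in\mathbb{Z}$ — this is just the parallelogram law $\|y-u\|^2+\|y+u\|^2=2\|y\|^2+2$ and its iterates. Hence a red copy of $\ell_3$ with middle point $y$ and $\sigma=\langle y,u\rangle$ amounts to the three numbers $b-2\sigma+1,\ b,\ b+2\sigma+1$ all lying in $S$, where $b=\|y\|^2$ and $|\sigma|\le\sqrt b$ — equivalently, three elements $b,c,c'\in S$ with $c+c'=2b+2$ and $|c-c'|\le 4\sqrt b$ — while a blue copy of $\ell_m$ amounts to a choice of $w\ge 0$ and $|s|\le\sqrt w$ with $\{w+2sj+j^2:0\le j<m\}\cap S=\emptyset$. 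So it suffices to produce $S$ meeting every such $m$-term parabola-sample while containing no such $\ell_3$-triple, and the resulting colouring then works simultaneously for every $n$.

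The crux is that the two demands pull against each other. The no-$\ell_3$ condition forces $S$ to be structured and locally thin: the case $\sigma=0$ already forbids two elements of $S$ at distance $1$, the degenerate case $c=b$ forbids distance $2$ (for $b\ge 1/4$), and at larger scales further degenerate configurations rule out more small distances, so $S$ can contain no interval. The no-$\ell_m$ condition pulls the opposite way: the ``centred'' copies of $\ell_m$ nearly tangent to a sphere of large radius $\rho$ have their $m$ points at squared radii $\rho^2,\rho^2+1,\rho^2+4,\dots$ — about $m/2$ of them, filling a shell of squared-radius width only about $(m-1)^2/4$ — so piercing all of them forces $S$ to have bounded gaps of size $O(m^2)$. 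A periodic $S$ — equally spaced concentric spheres or annuli — provably cannot do both: a union of intervals of period $p\ge 4$ kills every $\ell_3$-triple once the intervals have length $<1$ (a short parity computation: the half-lattice $\tfrac p2\mathbb{Z}$ then misses $[1-w,1+w]$), but an $S$ this thin cannot meet every set $\{\rho^2+i^2\}$. The construction must therefore be genuinely non-homogeneous in scale — and in particular cannot simply be self-similar under dilation, since a structure operating at scale $1/\rho$ in the radius is periodic in the squared radius. So one wants the spacing of the spheres to vary with the radius, and within each scale a Behrend-type gadget: a set that is locally free of (suitably flexible) $3$-term progressions, hence $\ell_3$-avoiding, yet syndetic at its own scale, hence $\ell_m$-piercing — with the scales glued together carefully, the bounded region near the origin treated separately, and the non-tangent copies of $\ell_m$ together with the non-degenerate copies of $\ell_3$ checked directly.

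The step I expect to be the real obstacle is exactly this reconciliation across all scales at once: exhibiting a multi-scale $S$ — or, should a purely radial colouring turn out to be impossible, a colouring built from a few carefully placed reference points, which replaces the single family of parabolas $w+2sj+j^2$ by a higher-dimensional family carrying enough extra room — for which $\ell_3$-freeness and $\ell_m$-piercing can be verified simultaneously at every scale and in every direction. Everything else — the reduction above, and the fact that a single $m$ then serves all $n$ — is soft.
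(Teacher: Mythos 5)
Your reduction to the one-dimensional problem is correct and is exactly the paper's first step (via $\|z+ju\|^2=\|z\|^2+2j\langle z,u\rangle+j^2$, i.e.\ the parallelogram law), but the proposal stops where the actual proof has to begin: you explicitly leave the construction of $S$ as ``the real obstacle,'' so the heart of the argument is missing. Worse, the structural conclusion you use to guide the search --- that a set periodic in the squared radius provably cannot work, so one needs a genuinely multi-scale, Behrend-type construction --- is incorrect: your argument only rules out periodic sets consisting of a \emph{single} short interval per period. The paper's colouring is precisely periodic in $\|x\|^2$: take a large prime $q$, set $m=q^3$, and colour the point $a$ by $\chi'(\lfloor\, \|a\|^2\rfloor \bmod q)$, where $\chi'$ is a \emph{random} red/blue colouring of $\mathbb{Z}_q$ with each residue red independently with probability $q^{-3/4}$. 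A red $\ell_3$ forces red integer solutions of $n_1+n_3=2n_2+c$ with $c\in\{1,2,3\}$ in $\mathbb{Z}_q$, and the expected number of these is less than $\tfrac12$; note in particular that this red set consists of full unit intervals, so your heuristic that ``$S$ can contain no interval'' is false (the forbidden distances $1$ and $2$ are never realized inside a half-open unit interval, only across intervals).

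Concretely, two ideas are missing from your plan, and they are what make a sparse periodic random red set pierce every blue parabola. First, an equidistribution statement for the real quadratic $j\mapsto w+2sj+j^2$ modulo $q$ (the paper's Lemma~2.1, proved by pigeonholing on the linear coefficient): among $m=q^3$ consecutive values the quadratic meets at least $q/6$ distinct intervals $[j,j+1)$ mod $q$, so any \emph{fixed} candidate blue $\ell_m$ survives with probability at most $(1-q^{-3/4})^{q/6}$, which is superexponentially small in $q^{1/4}$. Second, a way to tame the union bound over the continuum of parabolas (parameters $w,s$): the paper invokes the Oleinik--Petrovsky--Thom--Milnor sign-pattern bound to show that, as the two real parameters vary, the parabolas realize at most $10^4m^6$ combinatorially distinct patterns of incidence with the unit intervals, so a finite union bound closes the argument. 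Without these two ingredients (or substitutes for them), your proposal is a correct restatement of the problem in one dimension plus a plan whose central step is unresolved --- and whose guiding heuristic would in fact steer you away from the construction that works.
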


Before our work, the best result that was known in this direction was a 50-year-old result of Erd\H{o}s et al.~\cite{EGMRSS1}, who showed that $\mathbb{E}^n \nrightarrow (\ell_6, \ell_6)$ for all $n$. Their proof uses a spherical colouring, where all points at the same distance from the origin receive the same colour. We will also use a spherical colouring, though, unlike the colouring in~\cite{EGMRSS1}, which is entirely explicit, our colouring will be partly random.

\section{Preliminaries}

In this short section, we note two key lemmas that will be needed in our proof. The first says that certain real-valued quadratic polynomials are reasonably well-distributed modulo a prime $q$.

\begin{lemma} \label{lem:dist}
Let $p(x) = x^2 + \alpha x + \beta$,  where $\alpha$ and $\beta$ are real numbers, and let $q$ be a prime number. Then, for $m = q^3$, the set $\{p(i)\}_{i=1}^m$ overlaps with at least $q/6$ of the intervals $[j, j+1)$ with $0 \leq j \leq q -1$ when considered mod $q$.
\end{lemma}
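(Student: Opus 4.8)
The plan is to first pass from the real numbers $p(i) \bmod q$ to integers. Using the identity $\floor{x/q} = \floor{\floor{x}/q}$, valid for every real $x$ and positive integer $q$, one gets $\floor{p(i) \bmod q} = \floor{p(i)} \bmod q$, so the interval $[j,j+1)$ is met by $\set{p(i) \bmod q : 1 \le i \le q^3}$ exactly when $j$ equals $\floor{p(i)} \bmod q$ for some $i \le q^3$. Hence it suffices to show that $\set{\floor{i^2 + \alpha i + \beta} \bmod q : 1 \le i \le q^3}$ has at least $q/6$ elements.

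The key structural fact is that $p(i+q) - p(i) = q(2i + q + \alpha) = q(2i + q + \floor{\alpha}) + q\{\alpha\}$, and modulo $q$ the first summand vanishes, so $p(i+q) \equiv p(i) + \theta \pmod q$ with $\theta := q\{\alpha\} \in [0,q)$. Writing $i = kq + r$ with $r \in \set{1,\dots,q}$ and $0 \le k < q^2$, the values $p(i) \bmod q$ for $1 \le i \le q^3$ are exactly the numbers $(p(r) + k\theta) \bmod q$, so everything is controlled by the orbit $O := \set{k\theta \bmod q : 0 \le k < q^2}$ on the circle $\RR/q\ZZ$. I would then split into two cases. If $O$ meets every one of the $q$ unit intervals $[j,j+1)$, then for any fixed $r$ the set $\set{(p(r) + k\theta) \bmod q : k}$ is a translate of $O$; since translating a subset of $\RR/q\ZZ$ by a real number changes the number of unit intervals it meets by at most a factor of $2$ (cover the set by those intervals and note that a length-$1$ interval meets at most two of them), this translate meets at least $q/2 \ge q/6$ intervals, and we are done.

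Otherwise $O$ misses some interval, so the $q^2$ indices $k$ fall into at most $q-1$ unit intervals and one interval $[a,a+1)$ receives more than $q$ of them. Those indices cannot all lie in a single residue class modulo $q$ (such a class accounts for only $q$ of the numbers $0,\dots,q^2-1$), so there are $k_1 < k_2$ among them with $d := k_2 - k_1$ not divisible by $q$; since $(k_1\theta) \bmod q$ and $(k_2\theta) \bmod q$ lie in the same unit interval, $d\theta = mq + \rho$ for some integer $m$ and some $\rho$ with $\abs{\rho} < 1$. Thus $\{\alpha\} = \theta/q$ satisfies $\abs{\{\alpha\} - m/d} < 1/(dq)$ with $1 \le d < q^2$ and $q \nmid d$. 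Now I restrict to the $q$ indices $i = d\ell$, $\ell = 1, \dots, q$ (all at most $dq < q^3$): writing $\alpha = c + m/d + \varepsilon$ with $c = \floor{\alpha}$ and $\abs{\varepsilon} < 1/(dq)$, one finds $p(d\ell) = Q(\ell) + \beta + \varepsilon d\ell$, where $Q(\ell) := d^2\ell^2 + (cd + m)\ell \in \ZZ$. Since $\abs{\varepsilon d\ell} < \ell/q \le 1$, we get $\floor{p(d\ell)} \equiv Q(\ell) + \nu_\ell \pmod q$ with $\nu_\ell$ ranging over at most three integers; and because $q \nmid d$, the polynomial $Q(\ell) \bmod q$ has invertible leading coefficient, so it realises each residue for at most two values of $\ell \in \set{1,\dots,q}$. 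Hence each residue is taken by at most six of the $q$ numbers $\floor{p(d\ell)} \bmod q$, so at least $q/6$ residues occur.

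The step I expect to be the crux is the second case, where three roles of $q$ must be balanced against one another: the pigeonhole uses all $q^2$ translates of a block of length $q$ (this is what forces $q^3 = q \cdot q^2$); one needs $q \nmid d$ so that $Q$ is a genuine quadratic modulo $q$; and one needs the progression $d, 2d, \dots, qd$ to have length exactly $q$ so that the approximation error $\varepsilon d\ell$ stays below $1$ throughout. The remaining ingredients — the floor identity, the factor-of-$2$ translation bound in the first case, and the at-most-six preimage count — are routine once this balance is in place.
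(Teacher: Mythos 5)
Your proof is correct, and it is organised genuinely differently from the paper's. The paper starts from a Dirichlet-type pigeonhole applied directly to $\alpha$: there is $k \le q^2$ with $k\alpha$ within $1/q$ of a multiple of $q$, and it then splits according to whether $q \mid k$. When $q \nmid k$ it passes to the progression $ki$, discards the linear term (an error of at most $1$ mod $q$) and uses that $i \mapsto k^2 i^2$ takes $(q+1)/2$ values mod $q$; when $k = sq$ it approximates $\alpha$ by $r/s$ with error at most $1/q^2$ and counts the values of $s^2 i^2 + r i$ mod $q$ along the progression $si$. Your dichotomy is instead on whether the orbit $\{k\theta \bmod q\}_{0 \le k < q^2}$, $\theta = q\{\alpha\}$, meets every unit interval: the covering case is settled by a purely metric translation argument with no algebra at all, and in the non-covering case your pigeonhole (more than $q$ orbit points in one interval, hence two indices in different residue classes mod $q$) automatically yields a denominator $d$ with $q \nmid d$, so a single computation --- the integer quadratic $Q(\ell) = d^2\ell^2 + (cd+m)\ell$ with invertible leading coefficient, at most two preimages per residue, times at most three possible values of $\lfloor \beta + \varepsilon d\ell\rfloor$ --- does the work that the paper needs two separate algebraic cases for. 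Both proofs run on the same engine (pigeonhole rational approximation, a genuine quadratic mod $q$ taking many values along a length-$q$ subprogression of $\{1,\dots,q^3\}$, and constant-factor losses multiplying to $6$ from the real perturbations), so the difference is organisational rather than substantive; your version buys a unified algebraic case and the clean recursion $p(i+q) \equiv p(i) + q\{\alpha\} \pmod q$, at the cost of the extra covering case. One cosmetic point: you use $m$ both for $q^3$ (as in the lemma) and for the numerator of the approximating rational; rename the latter to avoid the clash.
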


\begin{proof}
By a standard argument using the pigeonhole principle, there exists some $k \leq q^2$ such that $|k \alpha| \leq 1/q$ mod $q$. We split into two cases, depending on whether $k$ is a multiple of $q$ or not. 

Suppose first that $k \not\equiv 0 \bmod q$ and consider the set of values $\{p(ki)\}_{i=1}^q$. Note first that $\{i^2\}_{i=1}^q$ is a set of $(q+1)/2$ distinct integers mod $q$, so, since $k$ is not a multiple of $q$, the same is also true of the set $\{k^2 i^2\}_{i=1}^q$. Hence, letting $p_1(x) = x^2 + \beta$, we see that the set $\{p_1(ki)\}_{i=1}^q$ overlaps with at least $q/2$ of the intervals $[j, j+1)$ with $0 \leq j \leq q -1$ when considered mod $q$. But $|ki\alpha| \leq 1$ mod $q$ for all $1 \leq i \leq q$, so that $|p(ki) - p_1(ki)| \leq 1$ for all $1 \leq i \leq q$. Therefore, since exactly three different intervals are within distance one of any particular interval, the set  $\{p(ki)\}_{i=1}^q$ overlaps with at least $q/6$ of the intervals $[j, j+1)$ mod $q$.

Suppose now that $k = sq$ for some $s \leq q$. Then $sq\alpha = rq + \epsilon$ for some $|\epsilon| \leq 1/q$, which implies that $\alpha = \frac{r}{s} + \epsilon'$, where $|\epsilon'| \leq 1/q^2$. Without loss of generality, we may assume that $r$ and $s$ have no common factors. Consider now the polynomial $p_2(x) = x^2 + \frac{r}{s} x$ and the set $\{p_2(si)\}_{i=1}^q$. Since $p_2(si) = s^2 i^2 + r i$, it is easy to check that $p_2(si) \equiv p_2(sj) \bmod  q$ if and only if $s^2(i+j) + r \equiv 0 \bmod q$. Since $r$ and $s$ are coprime, this implies that the set $\{p_2(si)\}_{i=1}^q$ takes at least $q/2$ values mod $q$. 
Hence, letting $p_3(x) = x^2 + \frac{r}{s} x + \beta$, we see that the set $\{p_3(si)\}_{i=1}^q$ overlaps with at least $q/2$ of the intervals $[j, j+1)$ with $0 \leq j \leq q -1$ when considered mod $q$. But, since $|\alpha - r/s| \leq 1/q^2$, we have that
$|p(si) - p_3(si)| = |\alpha - \frac{r}{s}| si \leq 1$, so that, as above, the set  $\{p(si)\}_{i=1}^q$ overlaps with at least $q/6$ of the intervals $[j, j+1)$ mod $q$.
\end{proof}

Given $M$ real polynomials $p_1, \dots, p_M$ in $N$ variables, 
a vector $\sigma \in \{-1, 0, 1\}^M$ is called a sign pattern of  $p_1, \dots, p_M$ if there exists some $x \in  \mathbb{R}^N$ such that the sign of $p_i(x)$ is $\sigma_i$ for all $1 \leq i \leq M$. The second result we need is the Oleinik--Petrovsky--Thom--Milnor theorem (see, for example,~\cite{BPR06}), which, for $N$ fixed, gives a polynomial bound for the number of sign patterns.

\begin{lemma} \label{lem:MT}
For $M \geq N \geq 2$, the number of sign patterns of $M$ real polynomials in $N$ variables, each of degree at most $D$, is at most $\left(\frac{50 DM}{N}\right)^N$.
\end{lemma}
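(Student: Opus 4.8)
The assertion is the classical Oleinik--Petrovsky--Thom--Milnor bound on the number of consistent sign conditions of a polynomial system, in the refined form due to Warren and to Basu--Pollack--Roy, so the plan is to recall how that proof goes and to extract the stated numerical bound. The topological engine is a bound on real varieties: if $f \in \RR[x_1,\dots,x_N]$ has degree at most $e$, the number of connected components of $\set{x \in \RR^N : f(x) = 0}$ is at most $e(2e-1)^{N-1} \le (2e)^N$, and the same bound holds for the common zero set of \emph{any} number of polynomials of degree at most $e$, because $\set{f_1 = \dots = f_j = 0}$ equals $\set{f_1^2 + \dots + f_j^2 = 0}$, which still has degree at most $2e$ --- crucially not $je$, which is why multiplying polynomials naively is the wrong move. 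I would cite this rather than reprove it; its proof is a short Morse-theoretic argument: replace the variety by a smooth compact hypersurface via a generic perturbation of the form $f^2 + \delta(x_1^{2k} + \dots + x_N^{2k}) = \delta\rho$ with $2k > e$ even, count critical points of a generic linear functional on it, and bound those by B\'ezout applied to the $N$ equations of degree at most $2k \le e+2$ that define them.

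Next I would reduce the counting of sign patterns to this topological bound. Zeros are handled by an infinitesimal perturbation: fixing a representative point $x_\sigma$ for each of the finitely many realisable sign patterns $\sigma$ of $p_1,\dots,p_M$ and taking $\varepsilon > 0$ smaller than every nonzero value $\abs{p_i(x_\sigma)}$ that occurs, the pair of signs of $p_i - \varepsilon$ and $p_i + \varepsilon$ at $x_\sigma$ equals $(+,+)$, $(-,-)$ or $(-,+)$ according as $\sigma_i$ equals $+1$, $-1$ or $0$, so $\sigma$ is recovered from the strict sign vector of $p_1 \pm \varepsilon,\dots,p_M \pm \varepsilon$ at $x_\sigma$; thus it suffices to bound the number of cells of the arrangement of the hypersurfaces $\set{p_i = \pm\varepsilon}$. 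The decisive point --- the inductive heart of the Basu--Pollack--Roy argument, and the reason one obtains a factor $N^{-N}$ rather than being stuck with $(O(DM))^N$ --- is that a cell of codimension $j$ lies on exactly $j$ of the hypersurfaces, so, summing over the choice of those $j \le N$ polynomials and over their $4^j$ local sign/perturbation types and bounding the cells supported on each common zero set by the Milnor--Thom estimate above, one arrives at
\[
\#\{\text{sign patterns of }p_1,\dots,p_M\} \;\le\; \sum_{i=0}^{N}\binom{M}{i}\,4^i\,D(2D-1)^{N-1}.
\]

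Turning this into the stated bound is routine. Using $D(2D-1)^{N-1} \le \tfrac12(2D)^N$ and, for $M \ge N$, the elementary estimate $\sum_{i=0}^{N}\binom{M}{i}4^i \le (N+1)(4eM/N)^N$ (each summand is at most $(4eM/i)^i \le (4eM/N)^N$, since $\binom{M}{i} \le (eM/i)^i$ and $(4eM/i)^i$ is non-decreasing in $i$ for $1 \le i \le N \le M$), the right-hand side is at most $\tfrac{N+1}{2}\paren{8e\,DM/N}^{N}$, and since $8e < 22$ and $\tfrac{N+1}{2} \le (50/22)^N$ for every $N \ge 2$, this is at most $\paren{50\,DM/N}^{N}$, as claimed.

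The main obstacle is really the Milnor--Thom topology bound itself --- engineering a perturbation that is simultaneously smooth, bounded, of controlled degree, and loses no connected components --- together with the combinatorial bookkeeping that localises the cell count to at most $N$ active polynomials; the closing constant-chasing is immaterial, since only a bound polynomial in $M$ for each fixed $N$ is needed in the rest of the paper, leaving ample slack in the constant $50$.
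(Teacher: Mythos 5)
The paper does not actually prove this lemma --- it is quoted as the Oleinik--Petrovsky--Thom--Milnor bound with a pointer to [BPR06] --- so there is no internal proof to clash with, and your reconstruction of the standard Warren/Basu--Pollack--Roy argument, together with the explicit constant-chasing, is essentially a correct account of the cited proof. The closing arithmetic is fine: $\binom{M}{i}4^i \le (4eM/i)^i \le (4eM/N)^N$ for $i \le N \le M$, $D(2D-1)^{N-1} \le \tfrac12(2D)^N$, and $\tfrac{N+1}{2} \le (50/22)^N$ for $N \ge 2$ all check out, and the slack up to $50$ would even absorb slightly weaker intermediate bounds. Two of your glosses are, however, exactly the nontrivial content of what is being cited and should not be presented as one-liners: (i) the assertion that the component bound $e(2e-1)^{N-1}$ ``holds for the common zero set of any number of polynomials because $f_1^2+\dots+f_j^2$ has degree at most $2e$'' does not follow by simply substituting degree $2e$ into the hypersurface bound (that would give $2e(4e-1)^{N-1}$, which is too weak for your constants); the correct statement for systems is Milnor's theorem itself, proved via the sum of squares together with the observation that each compact component of the perturbed level set carries at least two critical points of a generic linear form, which halves the B\'ezout count --- since you say you would cite this, fine, but the justification as written is not a proof of it. (ii) The claim that ``a cell of codimension $j$ lies on exactly $j$ of the hypersurfaces'' is false in general (tangencies and singular intersections can put a low-dimensional face on many hypersurfaces); the rigorous route is the Basu--Pollack--Roy infinitesimal-perturbation/general-position argument that associates to each realizable sign condition a subfamily of at most $N$ perturbed polynomials, which is where the $\binom{M}{i}4^i$ really comes from. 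With those two steps attributed to the literature rather than derived, your write-up matches the intended proof of the lemma and correctly verifies the stated constant.
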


\section{Proof of Theorem~\ref{thm:main1}}

Suppose that $a_1, a_2, a_3 \in \mathbb{R}^n$ form a copy of $\ell_3$ with $|a_1 - a_2| = |a_2 - a_3| = 1$. If the points are at distances $x_1$, $x_2$ and $x_3$, respectively, from the origin $o$ and the angle $a_1a_2o$ is $\theta$, then we have
\[x_1^2 = x_2^2 + 1 -2x_2 \cos \theta\]
and
\[x_3^2 = x_2^2 + 1 + 2x_2 \cos \theta.\]
Adding the two gives
\[x_1^2 + x_3^2 = 2x_2^2 + 2.\]
Similarly, if $a_1, a_2, \dots, a_m \in  \mathbb{R}^n$ form a copy of $\ell_m$ with $|a_i - a_{i+1}| = 1$ for all $i = 1, 2, \dots, m-1$, then, again writing $x_i$ for the distance of $a_i$ from the origin, we have 
\[x_{i-1}^2 + x_{i+1}^2 = 2 x_i^2 + 2\]
for all $i = 2, \dots, m-1$. Given these observations, our aim will be to colour $\mathbb{R}_{\geq 0}$ so that there is no red solution to $y_1 + y_3 = 2 y_2 + 2$ and no blue solution to the system $y_{i-1} + y_{i+1} = 2 y_i + 2$ with $i = 2, \dots, m-1$. Assuming that we have such a colouring $\chi$, we can simply colour a point $a \in \mathbb{R}^n$ by $\chi(|a|^2)$ and it is easy to check that there is no red copy of $\ell_3$ and no blue copy of $\ell_m$.

We have therefore moved our problem to one of finding a natural number $m$ and a colouring $\chi$ of $\RR_{\geq 0}$ with no red solution to $y_1 + y_3 = 2 y_2 + 2$ and no blue solution to the system $y_{i-1} + y_{i+1} = 2 y_i + 2$ with $i = 2, \dots, m-1$. Let $q$ be a prime number. We will take $m = q^3$ and define $\chi$ by choosing an appropriate colouring $\chi'$ of $\ZZ_q$ and then setting $\chi(y) = \chi'(\lfloor y \rfloor \bmod q)$ for all $y \in \RR_{\geq 0}$. Our aim now is to show that there is a suitable choice for $\chi'$. For this, we consider a random red/blue-colouring $\chi'$ of $\ZZ_q$ and show that, for $q$ sufficiently large, the probability that $\chi$ contains either of the banned configurations is small.

Concretely, suppose that $\ZZ_q$ is coloured randomly in red and blue with each element of $\ZZ_q$ coloured red with probability $p = q^{-3/4}$ and blue with probability $1-p$. With this choice, the expected number of solutions in red to any of the equations $y_1 + y_3 = 2 y_2 + c$ with $c \in \{1, 2, 3\}$ is at most
\[3p^3 q^2 + 9 p^2 q < 12 q^{-1/4} < \frac 12,\]
where we used that there are at most $3q$ solutions to any of our $3$ equations with two of the variables $\{y_1, y_2, y_3\}$ being equal and that $q$ is sufficiently large. Note that if there are indeed no red solutions to these three equations over $\ZZ_q$, then there is no red solution to $y_1 + y_3 = 2 y_2 + 2$ in the colouring $\chi$ of $\RR$. Indeed, if $y_i = n_i + \epsilon_i$ with $0 \leq \epsilon_i < 1$, then $n_i$ is coloured red in $\chi'$ and 
\[n_1 + n_3 = 2 n_2 + 2 + 2\epsilon_2 - \epsilon_1 - \epsilon_3.\]
But $|2\epsilon_2 - \epsilon_1 - \epsilon_3| < 2$, so we must have
\[n_1 + n_3 = 2 n_2 + c\]
for $c \in \{1, 2, 3\}$. However, we know that there are no red solutions to any of these equations in the colouring $\chi'$, so there is no red solution to $y_1 + y_3 = 2 y_2 + 2$ in the colouring $\chi$.

For the blue configurations, we first observe that if the $y_i$ satisfy the equations $y_{i-1} + y_{i+1} = 2 y_i + 2$ with $i = 2, \dots, m-1$ with $y_1 = a$ and $y_2 = a + d$, then $y_i = a + (i-1)d + (i^2 - 3i + 2)$. In particular, by Lemma~\ref{lem:dist}, at least $q/6$ elements of the sequence $y_1, \dots, y_m$ lie in different intervals $[j, j+1)$ with $0 \leq j \leq q-1$ when considered mod $q$. 

Our aim now is to apply Lemma~\ref{lem:MT} to count the number of different ways in which a set of solutions $(y_1, y_2, \dots, y_m)$ to our system of equations can overlap the collection of intervals $[j, j+1)$ mod $q$. Without loss of generality, we may assume that $0 \leq a, d < q$. Since, under this assumption, any set of solutions over $\mathbb{R}$ to our system of equations is contained in the interval $[0,2m^2)$, it will suffice to count the number of feasible overlaps with the intervals $[j, j+1)$ with $0 \leq j \leq 2m^2-1$. Since we need to check at most two linear inequalities
in the two variables $a$ and $d$ to check whether each of the $m$ points are placed in each of the $2m^2$ intervals, we can apply Lemma~\ref{lem:MT} with $N = 2$, $D = 1$ and $M = 2 \cdot m \cdot 2m^2 =4m^3$ to conclude that the points $y_1, \dots, y_m$ overlap the intervals $[j,j+1)$ with $0 \leq j \leq 2m^2-1$ in at most $(100m^3)^{2} = 10^4 m^6$ different ways. 
But now, since at least $q/6$ of the $y_i$ must always be in distinct intervals, a union bound implies that the probability we have a blue solution to our system of equations is at most
\[10^4 m^6 (1 - q^{-3/4})^{q/6} <  \frac 12\]
for $m$ sufficiently large. Combined with our earlier estimate for the probability of a red solution to $y_1 + y_3 = 2 y_2 + 2$, we see that for $m$ sufficiently large ($m = 10^{50}$ will suffice) there exists a colouring with no red $\ell_3$ and no blue $\ell_m$, as required.

\section{Concluding  remarks}

We say that a set $X \subset \mathbb{E}^d$ is {\it Ramsey} if for every natural number $r$ there exists $n$ such that every $r$-colouring of $\mathbb{E}^n$ contains a monochromatic copy of $X$. In~\cite{CF19}, it was shown that 
a set $X$ is Ramsey if and only if for every natural number $m$ and every fixed $K \subset \mathbb{E}^m$ there exists $n$ such that $\EE^n \rightarrow (X, K)$. We suspect that there may be an even simpler characterisation.

\begin{conjecture} \label{conj:char}
A set $X$ is Ramsey if and only if for every natural number $m$ there exists $n$ such that $\EE^n \rightarrow (X, \ell_m)$.
\end{conjecture}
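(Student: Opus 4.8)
The forward implication requires nothing new. By the characterisation of~\cite{CF19} recalled above, if $X$ is Ramsey then for every $m$ and every finite $K \subseteq \EE^m$ there is $n$ with $\EE^n \rightarrow (X, K)$, and specialising to $K = \ell_m \subseteq \EE^1 \subseteq \EE^m$ gives precisely the relation $\EE^n \rightarrow (X, \ell_m)$. So all the content lies in the reverse implication, and, invoking that characterisation once more, it is equivalent to the following statement: \emph{if $\EE^n \rightarrow (X, \ell_m)$ holds for every $m$ (with $n = n(m)$ allowed to depend on $m$), then for every finite $K \subseteq \EE^d$ there is $N$ with $\EE^N \rightarrow (X, K)$.} Informally, one must prove that on the blue side a sufficiently long equally spaced segment is already as strong a "test configuration" as an arbitrary finite point set.

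The first route I would attempt is an amalgamation argument. Fix $K$ with $|K| = k$, take $m$ huge compared with $k$ and $d$, let $n = n(m)$ satisfy $\EE^n \rightarrow (X, \ell_m)$, and work in $\EE^N$ with $N = n \cdot f(k,d)$, regarded as an orthogonal direct sum of many copies of $\EE^n$. In any red/blue-colouring of $\EE^N$ with no red $X$, each of these copies of $\EE^n$ contains a blue $\ell_m$; the aim is then to classify each such blue segment by the combinatorial pattern it makes on a fixed fine net --- of which, by a sign-pattern count in the spirit of Lemma~\ref{lem:MT}, there are only boundedly many --- to pass to a large sub-family of segments sharing one pattern, and to use the freedom in positioning the orthogonal summands to extract a blue copy of $K$. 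A second, more hands-on route would instead try to show directly that a \emph{non-spherical} $X$ satisfies $\EE^n \nrightarrow (X, \ell_m)$ for all $n$ and some $m$: such an $X$ carries an affine dependence $\sum_i \mu_i x_i = 0$ with $\sum_i \mu_i = 0$ along which $C := \sum_i \mu_i |x_i|^2$ is a nonzero congruence invariant of $X$ (for $\ell_3$ this recovers $y_1 + y_3 = 2y_2 + 2$), and one would then imitate the colouring in the proof of Theorem~\ref{thm:main1}, colouring $a \in \RR^n$ by $\chi'(\lfloor |a|^2 \rfloor \bmod q)$ for a sparse random $\chi'$, excluding blue copies of $\ell_m$ exactly as there and excluding red solutions to $\sum_i \mu_i y_i = C$, hence red copies of $X$, by the sparsity of red.

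I expect the concluding step of either route to be the genuine obstacle. In the amalgamation, the natural pigeonhole delivers a \emph{homothetic}-type copy of $K$ rather than a congruent one, and isometric copies are rigid about scale --- segments $\ell_m$ of different lengths are not scaled copies of one another --- so the usual Gallai-style assembly does not transfer, and one would need a genuinely new construction showing that long equally spaced segments, amalgamated in high dimension, realise every finite configuration \emph{isometrically}. The sphericality route, even if it could be completed, only yields the conjecture via the implication "$X$ not Ramsey $\Rightarrow$ $X$ not spherical", which is the converse of the known fact that Ramsey sets are spherical and is precisely the open half of Graham's conjecture; and even granting non-sphericality, the defect relation $\sum_i \mu_i x_i = 0$ may have irrational coefficients --- already for $\{0, 1, \sqrt 2\}$ on a line --- so the mod-$q$ pull-back underlying the proof of Theorem~\ref{thm:main1} would have to be replaced by something subtler, presumably an equidistribution argument tailored to a single real linear equation. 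It is because both approaches run squarely into difficulties of this kind that the statement is offered only as a conjecture.
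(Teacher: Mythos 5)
There is nothing in the paper to compare your attempt against: the statement is Conjecture~\ref{conj:char}, and the paper gives no proof of it. It only records that the forward implication is immediate from the characterisation of Ramsey sets in~\cite{CF19}, and that a first step toward the reverse implication would be Conjecture~\ref{conj:spher}, of which Theorem~\ref{thm:main1} is the case $X = \ell_3$. Your write-up is consistent with this status: you prove exactly the easy direction the paper notes, you correctly isolate the reverse implication as the open content, and your second route is precisely the decomposition the authors have in mind, namely proving Conjecture~\ref{conj:spher} for non-spherical $X$ and then combining it with the implication ``$X$ not Ramsey $\Rightarrow$ $X$ not spherical'', which is the open converse of the Erd\H{o}s et al.\ result~\cite{EGMRSS1} and not something the paper claims. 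The obstruction you flag there --- that the defect relation may have irrational coefficients, already for three collinear points with gaps $1$ and $\alpha$ irrational --- is exactly the next case the paper singles out in its concluding remarks. Your first, amalgamation-style route does not appear in the paper, and as you yourself observe it founders on the rigidity of isometric (as opposed to homothetic) copies. So: no error in your reasoning, but also no proof --- correctly so, since the statement is offered, and remains, a conjecture.
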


Of course, by the result mentioned  above, we already know that if $X$ is Ramsey, then $\EE^n \rightarrow (X, \ell_m)$ for $n$ sufficiently large. It therefore remains to show that if $X$ is not Ramsey, then there exists $m$ such that $\EE^n \nrightarrow (X, \ell_m)$ for all $n$. To prove this in full generality might be difficult. However, an important result of Erd\H{o}s et al.~\cite{EGMRSS1} says that if $X$ is Ramsey, then it must be spherical, in the sense that it must be contained in the  surface of a sphere of some dimension. Thus, a first step towards Conjecture~\ref{conj:char} might be to prove the following.

\begin{conjecture} \label{conj:spher}
For every non-spherical set $X$, there exists a natural number $m$ such that $\EE^n \nrightarrow (X, \ell_m)$ for all $n$. 
\end{conjecture}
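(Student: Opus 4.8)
I plan to generalise the proof of \cref{thm:main1}, replacing the linear relation coming from $\ell_3$ by one coming from the non-sphericity of $X$. For $X=\{x_1,\dots,x_k\}$, recall that $X$ is spherical precisely when the system $2\langle x_i,p\rangle+t=|x_i|^2$ ($1\le i\le k$) is solvable in $(p,t)$, so by the Fredholm alternative $X$ is non-spherical exactly when there are reals $c_1,\dots,c_k$ with $\sum_i c_i=0$, $\sum_i c_i x_i=\mathbf 0$ and $\gamma:=\sum_i c_i|x_i|^2\ne0$. Since an isometric embedding of $X$ extends to an affine map, any copy $a_1,\dots,a_k$ of $X$ in any $\EE^n$ satisfies $\sum_i c_i a_i=\mathbf 0$, and hence, writing $y_i=|a_i|^2$, the single relation $\sum_i c_i y_i=\gamma$ with $\gamma\ne0$. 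As in \cref{thm:main1} it therefore suffices to $2$-colour $\RR_{\ge0}$ so that there is no red solution to $\sum_i c_i y_i=\gamma$ with $y_i\ge0$ and no blue solution to $y_{i-1}+y_{i+1}=2y_i+2$ for $i=2,\dots,m-1$, and then colour $a\in\EE^n$ by $\chi(|a|^2)$. Take a prime $q$, set $m=q^3$, and use the scaled spherical colouring $\chi(y)=\chi'(\lfloor\lambda y\rfloor\bmod q)$, where $\chi'\colon\ZZ_q\to\{\mathrm{red},\mathrm{blue}\}$ is random with red-probability $p=q^{-1+1/(2k)}$ and $\lambda\in\{1,\dots,q-1\}$ is a scaling parameter to be fixed below.

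The blue side is essentially unchanged from \cref{thm:main1}: after the substitution $u_i=\lambda y_i$ the forbidden system has solutions $u_i\approx\lambda i^2+\alpha i+\beta$, a variant of \cref{lem:dist} (using only that $\lambda$ is invertible mod $q$) shows that at least $q/6$ of the $u_i$ fall into distinct intervals mod $q$, and \cref{lem:MT} with $N=2$, $D=1$, $M=O(m^3)$ bounds the number of overlap patterns by $O(m^6)$; a union bound then gives a blue solution with probability at most $O(q^{18})(1-q^{-1+1/(2k)})^{q/6}=o(1)$ for each fixed $k$.

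For the red side, suppose first that $X$ admits a representation with rational coordinates, so that its affine-dependence space is rational and $\gamma$ is rational. Clearing denominators turns the relation, after rescaling by $\lambda$, into $\sum_i b_i u_i=\lambda G_1$ with $b_i\in\ZZ$, $\sum_i b_i=0$ and a fixed nonzero integer $G_1$. Writing $u_i=n_i+\epsilon_i$ with $\epsilon_i\in[0,1)$, the integer $\sum_i b_i n_i$ lies within $\tfrac12\sum_i|b_i|$ of $\lambda G_1$, so takes $O(1)$ values, in $O(1)$ residue classes mod $q$. Now choose $q$ large with $q\nmid G_1$, and then $\lambda\in\{1,\dots,q-1\}$ so that $\lambda G_1$ is at distance $\ge\tfrac12\sum_i|b_i|$ from $q\ZZ$; none of the admissible values of $\sum_i b_i n_i$ is then a multiple of $q$. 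This is exactly what rules out the degenerate configurations: whenever the residues mod $q$ of the variables in the support partition into classes each of zero coefficient-sum, $\sum_i b_i n_i$ is forced into $q\ZZ$ and no solution exists, while otherwise some class carries a nontrivial congruence, so that a configuration with exactly $j$ distinct residues has $O(q^{j-1})$ realisations. Summing $q^{j-1}p^j$ over $1\le j\le k$ bounds the expected number of red solutions by $O(q^{-1/2})=o(1)$; together with the blue estimate this establishes \cref{conj:spher} for every $X$ with a rational representation — a considerable strengthening of \cref{thm:main1}.

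The remaining case, which I expect to be the main obstacle, is that of a non-spherical $X$ whose affine dependences are all irrational; the simplest instance is the collinear triple $\{0,1,\sqrt2\}$, for which the forbidden relation is $(\sqrt2-1)y_1-\sqrt2 y_2+y_3=2-\sqrt2$. Now the mod-$q$ reduction admits no harmless collapse: a fully collapsed configuration gives $q(\sqrt2(s_1-s_2)+(s_3-s_1))+\sum_i c_i\epsilon_i=2-\sqrt2$ with $s_i=\lfloor u_i/q\rfloor$, and since $\sqrt2$ is irrational the quantity $\sqrt2(s_1-s_2)+(s_3-s_1)$ can be brought within $O(1/q)$ of any target in roughly $q^4$ ways with $s_i=O(q^5)$; thus every residue class is realisable, and a random mod-$q$ colouring contains a red solution with probability $1-(1-p)^{\Theta(q)}\to1$, a conclusion that no rescaling can undo. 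Circumventing this seems to demand either a non-periodic (or non-spherical) colouring or a genuinely Diophantine input controlling how often $\sum_i c_i s_i$ lands near a target as the non-negative integers $s_i$ range over a large box; one tempting route is to replace the i.i.d.\ colouring of $\ZZ_q$ by a more structured random colouring whose red set already avoids the dangerous additive patterns, so that collapsed configurations pick up the missing constraint, but making this compatible with the blue estimate is delicate and may well require ideas beyond the present method.
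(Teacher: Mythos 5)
There is a genuine gap, and it is built into your own write-up: the statement in question is \cref{conj:spher}, which the paper does not prove --- it is offered as an open conjecture, with \cref{thm:main1} verifying only the very first case $X=\ell_3$. Your proposal does not prove it either, and you concede this explicitly: the argument is only sketched when $X$ admits a representation with rational coordinates, so that the affine dependence $\sum_i c_i y_i=\gamma$ witnessing non-sphericity can be taken with integer coefficients, and the case where every such dependence is irrational (the collinear triple $\{0,1,\sqrt 2\}$ being the simplest instance) is left unresolved. That is precisely the case the paper itself singles out in its concluding remarks as the next point of interest, and your own analysis explains why it is not a repairable detail but the crux: for any colouring of the form $\chi(y)=\chi'(\lfloor \lambda y\rfloor \bmod q)$, collapsed configurations no longer force a congruence $\sum_i b_i n_i\equiv v \pmod q$, every residue pattern becomes realisable, and a positive-density random red set then contains a red copy of $X$ with probability tending to $1$; no choice of the scaling $\lambda$ rescues this. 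So the proposal establishes, at best, a special case of the conjecture, not the conjecture.

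Within that special case the plan is sensible and stays close to the method of \cref{thm:main1}: the reduction via the standard characterisation of sphericity (a copy $a_1,\dots,a_k$ of $X$ always satisfies $\sum_i c_i|a_i|^2=\gamma\neq 0$), a random colouring of $\ZZ_q$, and the combination of \cref{lem:dist} and \cref{lem:MT} for the blue side. Even there, several steps are asserted rather than proved and would need care: the variant of \cref{lem:dist} for a quadratic with leading coefficient $\lambda$ rather than $1$; the claim that every admissible residue pattern with some class of nonzero coefficient sum contributes only $O(q^{j-1})$ patterns (including the degenerate patterns where several $y_i$ share a residue, which in the paper's $\ell_3$ argument required a separate count); and the choice of $\lambda$ and $q$ relative to $G_1$ and $\sum_i|b_i|$. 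None of this, however, changes the verdict: \cref{conj:spher} remains unproved by the proposal, exactly as it remains unproved in the paper.
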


The simplest example of a non-spherical set is the line $\ell_3$, so our main result may be seen as a verification of Conjecture~\ref{conj:spher} in this particular case. The next case of interest seems to be when $X$ consists of three points $a_1, a_2, a_3$ on a line, but now with $|a_1 - a_2|  =  1$ and $|a_2 - a_3| = \alpha$ for some irrational $\alpha$.

\end{document}